\def\line#1{\hbox to \hsize{#1\hfill}}
\theoremstyle{plain} 
\newtheorem{theorem}{\indent\sc Theorem}[section]
\newtheorem{corollary}[theorem]{\indent\sc Corollary}
\newtheorem{proposition}[theorem]{\indent\sc Proposition}
\theoremstyle{definition}
\newtheorem{remark}[theorem]{\indent\sc Remark}
\title{On Hamiltonian minimal submanifolds in the space of oriented geodesics in real space forms}
\author{Nikos Georgiou$^{*}$, G. A. Lobos} 
\date{29 November 2014} 
\begin{document}

\maketitle



\footnote{ $^{*}$The author is partially supported by Fapesp (2010/08669-9).}

\begin{abstract}
 We prove that a deformation of a hypersurface in a $(n+1)$-dimensional real space form ${\mathbb S}^{n+1}_{p,1}$ induce a Hamiltonian variation of the normal congruence in the space ${\mathbb L}({\mathbb S}^{n+1}_{p,1})$ of oriented geodesics. As an application, we show that every Hamiltonian minimal submanifold in ${\mathbb L}({\mathbb S}^{n+1})$ (resp. ${\mathbb L}({\mathbb H}^{n+1})$) with respect to the (para-) K\"ahler Einstein structure is locally the normal congruence of a hypersurface $\Sigma$ in ${\mathbb S}^{n+1}$ (resp. ${\mathbb H}^{n+1}$) that is a critical point of the functional ${\cal W}(\Sigma)=\int_\Sigma\left(\Pi_{i=1}^n|\epsilon+k_i^2|\right)^{1/2}$, where $k_i$ denote the principal curvatures of $\Sigma$ and $\epsilon\in\{-1,1\}$. In addition, for $n=2$, we prove that every Hamiltonian minimal surface in ${\mathbb L}({\mathbb S}^{3})$ (resp. ${\mathbb L}({\mathbb H}^{3})$) with respect to the (para-) K\"ahler conformally flat structure is locally the normal congruence of a surface in ${\mathbb S}^{3}$ (resp. ${\mathbb H}^{3}$) that is a critical point of the functional ${\cal W}'(\Sigma)=\int_\Sigma\sqrt{H^2-K+1}$ (resp. ${\cal W}'(\Sigma)=\int_\Sigma\sqrt{H^2-K-1}\; $), where $H$ and $K$ denote, respectively, the mean and Gaussian curvature of $\Sigma$.
\end{abstract}

\section{Introduction}

The space ${\mathbb L}(M)$ of oriented geodesics of a pseudo-Riemannian manifold $(M,g)$ has been of great interest for the last three decades and has been studied by different authors (see for example \cite{AGK},\cite{An1},\cite{gag},\cite{klbg},\cite{klbg1},\cite{hitch},\cite{salvai0},\cite{salvai}). When $(M,g)$ is a Riemannian symmetric space of rank one, Alekseevsky, Guilfoyle and Kligenberg have described in \cite{AGK} all possible metrics defined on ${\mathbb L}(M)$  that are invariant under the isometry group of $g$. 

In the case where $(M,g)$  is a real $(n+1)$-dimensional space form ${\mathbb S}^{n+1}_{p,1}$ of signature $(p,n+1-p)$ with constant sectional curvature one, Anciaux has showed in \cite{An1} that ${\mathbb L}({\mathbb S}^{n+1}_{p,1})$ admits a K\"ahler or a para-K\"ahler structure $(G,{\mathbb J},\Omega)$, where ${\mathbb J}$ is the complex or paracomplex structure and $\Omega$ is the symplectic structure such that the metric $G$ is Einstein and is invariant under the isometry group of $g$. In the same work, for $n=2$, Anciaux has proved that ${\mathbb L}({\mathbb M})$ admits an extra K\"ahler or para-K\"ahler structure  $(G',{\mathbb J}',\Omega)$, where ${\mathbb J}'$ is the complex or paracomplex structure, such that the invariant metric $G'$ is of neutral signature, locally conformally flat  and is invariant under the isometry group of $g$. 

The submanifold theory of ${\mathbb L}(M)$ gives some interesting informations about the submanifold theory of $M$. For example, the normal congruence (or Gauss map) of a one-parameter family of parallel hypersurfaces in $M$ is a Lagrangian submanifold (the induced symplectic structure identically) of the corresponding space of geodesics (see for example \cite{An1}). In particular, the normal congruence $L(\Sigma)$ of a Weingarten surface $\Sigma$ in ${\mathbb S}^3_{p,1}$ (its principal curvatures are functionally related) is flat with respect to the metric $G'$ induced on $L(\Sigma)$ \cite{An1}.

Let $(M,J,g,\omega)$ be a (para-) K\"ahler manifold and let $\phi:\Sigma\rightarrow M$ be a Lagrangian immersion. A normal vector field $X$ is called \emph{Hamiltonian} if $X=J\nabla u$, where $J$ is the  (para-) complex structure and $\nabla u$ is the gradient of $u\in C^{\infty}(\Sigma)$ with respect to the non-degenerate induced metric $\phi^{\ast}g$. We say that a variation $(\phi_t)$ of $\phi$ is a \emph{Hamiltonian variation} if its velocity $X=\partial_t|_{t=0}\phi_t$ is a Hamiltonian vector field with the additional condition that the function $u$ is compactly supported. The Lagrangian immersion $\phi$ is said to be \emph{Hamiltonian minimal} or \emph{$H$-minimal} if it is a critical point of the volume functional with respect to Hamiltonian variations. The first variation formula of the volume functional implies that a Hamiltonian minimal submanifold is characterised by the equation $\mbox{div} JH=0$, where $H$ denotes the mean curvature vector of $\phi$ and div is the divergence operator with respect to the induced metric  \cite{Oh1}. Further study of $H$-minimal submanifolds can be found at the following articles \cite{BC, JLS,Lee, Oh2}. 

Palmer showed in \cite{Palmer1} that a smooth variation of a hypersurface in the sphere ${\mathbb S}^{n+1}$ induces  a Hamiltonian variation of the Gauss map in ${\mathbb L}({\mathbb S}^{n+1})$. An analogue result for the  3-dimensional Euclidean space ${\mathbb E}^3$ has been shown by Anciaux, Guilfoyle and Romon in \cite{AGR}. Following similar computations that Palmer used in \cite{Palmer1}, we prove that  any smooth variation of a hypersurface in the real space form ${\mathbb S}^{n+1}_{p,1}$ induces a Hamiltonian variation in the symplectic manifold $(L^{\pm}({\mathbb S}^{n+1}_{p,1}),\Omega)$. In particular we prove the following:
\begin{theorem}\label{t:theo1}
Let $\phi_t$, $t\in (-\epsilon,\epsilon)$ be a smooth one-parameter deformation of an immersion $\phi:=\phi_{t=0}$, of the $n$-dimensional oriented manifold $\Sigma$ in the real space form ${\mathbb S}^{n+1}_{p,1}$. Then, the corresponding Gauss maps $\Phi_t$ form a Hamiltonian variaton with respect to the symplectic manifold $(L^{\pm}({\mathbb S}^{n+1}_{p,1}),\Omega)$. 
\end{theorem}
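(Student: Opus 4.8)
The plan is to show that the variation of the Gauss map, expressed in the symplectic manifold $(L^{\pm}(\mathbb{S}^{n+1}_{p,1}),\Omega)$, has velocity vector field that is Hamiltonian, i.e.\ that the one-form $\iota_X\Omega$ obtained by contracting $\Omega$ with the velocity $X=\partial_t|_{t=0}\Phi_t$ is exact. Since the Gauss map of any hypersurface is a Lagrangian immersion (the induced symplectic form vanishes identically, as recalled in the introduction), the velocity $X$ restricted to the Gauss map $\Phi=\Phi_0$ may be identified, via $\mathbb{J}$ and the induced metric, with a vector field on $\Sigma$ plus a normal Hamiltonian component; the assertion is precisely that the normal component is $\mathbb{J}\nabla u$ for some $u\in C^\infty(\Sigma)$, and that the tangential component is irrelevant (it corresponds to reparametrising, which does not change the congruence as a submanifold).

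First I would set up coordinates: write the deformation as $\phi_t(x)=\exp_{\phi(x)}(t V(x))+O(t^2)$ where $V$ is the deformation vector field along $\phi$, decompose $V=V^\top+f\,\nu$ into tangential and normal parts with $\nu$ the oriented unit normal and $f\in C^\infty(\Sigma)$, and then compute how the unit normal $\nu_t$ (hence the geodesic $\gamma_{\phi_t(x),\nu_t(x)}$ determining $\Phi_t(x)\in L^{\pm}(\mathbb{S}^{n+1}_{p,1})$) varies to first order. This is the computation Palmer carried out for $\mathbb{S}^{n+1}$ in \cite{Palmer1} and Anciaux--Guilfoyle--Romon for $\mathbb{E}^3$ in \cite{AGR}; the point is to redo it uniformly for the space form $\mathbb{S}^{n+1}_{p,1}$ of any signature. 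Next I would plug the resulting expression for $X=\partial_t|_{t=0}\Phi_t$ into Anciaux's explicit formula for the symplectic form $\Omega$ on $L^{\pm}(\mathbb{S}^{n+1}_{p,1})$ from \cite{An1}, and simplify. The tangential part $V^\top$ should contribute a term that is either zero or an exact form (being a Lie-derivative/reparametrisation term), and the normal part should contribute $df$ up to the identification; concretely one expects $\iota_X\Omega = df$ (or $\pm df$, or $d$ of some primitive built from $f$), so that $u$ can be taken to be (a constant multiple of) $f$.

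The main obstacle I anticipate is bookkeeping with signs and with the indefinite metric: in signature $(p,n+1-p)$ one must be careful that the geodesic through $\phi(x)$ with initial velocity $\nu(x)$ is non-degenerate (spacelike normal, or the relevant causal character), that $\exp$ behaves with the right trigonometric-versus-hyperbolic functions, and that the formula for $\Omega$ and the pairing used to define ``gradient of $u$'' carry consistent signs — these are exactly the places where the generalisation from the Riemannian $\mathbb{S}^{n+1}$ of \cite{Palmer1} could go wrong. A secondary point is to make sure the compact-support hypothesis is correctly propagated: if the deformation $\phi_t$ is compactly supported (constant outside a compact set), then $f$ is compactly supported, hence $u$ is, so the variation is Hamiltonian in the precise sense defined in the introduction. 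Once the identity $\iota_X\Omega=du$ with $u$ compactly supported is established, the theorem follows immediately from the definition of Hamiltonian variation, with no further appeal to the minimality condition $\operatorname{div}\mathbb{J}H=0$ (which belongs to the applications, not to this statement).
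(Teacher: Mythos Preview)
Your proposal is correct and follows essentially the same route as the paper: decompose the variation vector field as $\dot\phi = fN + Y$ (your $V = V^\top + f\nu$), compute the first-order variation $\dot N$ of the unit normal, and then pair $\dot\Phi$ against $\mathbb{J}\bar X$ via $G$ (equivalently, contract with $\Omega$) to obtain $-df(X)$, yielding $\dot\Phi^\bot = -\mathbb{J}\overline{\nabla} f$. The only notable difference is that the paper works directly in the ambient linear model $\Phi = \phi\wedge N \in \Lambda^2(\mathbb{R}^{n+2})$ rather than through the exponential map on $\mathbb{S}^{n+1}_{p,1}$; this sidesteps the trigonometric/hyperbolic bookkeeping you anticipate and makes the sign-tracking in indefinite signature essentially automatic.
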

Anciaux, Guilfoyle and Romon have proved in \cite{AGR} that a Hamiltonian minimal surface in ${\mathbb L}({\mathbb E}^3)$ (resp. ${\mathbb L}({\mathbb E}_1^3)$) is the Gauss map of a surface $S$ in ${\mathbb E}^3$ (resp. ${\mathbb E}^3_1$) which is a critical point of the functional ${\cal F}=\int_{S}\sqrt{H^2-K}dA$, where $H$ and $K$ denote the mean curvature and the Gauss curvature, respectively. In this article we extend this result for the case of the space ${\mathbb L}({\mathbb S}^{n+1}_{p,1})$ of oriented geodesics in a $(n+1)$-dimensional real space form. In particular, we consider the (para-) K\"ahler  Einstein structure $(G,{\mathbb J},\Omega)$ and the locally conformally flat (para-) K\"ahler structure $(G',{\mathbb J}',\Omega)$ both endowed on ${\mathbb L}({\mathbb S}^{n+1}_{p,1})$. Then, as an application of Theorem \ref{t:theo1}, we prove the following:
\begin{theorem}\label{c:cor1}
Let $\phi:\Sigma^n\rightarrow {\mathbb S}^{n+1}_{p,1}$ be a real diagonalizable hypersurface in ${\mathbb S}^{n+1}_{p,1}$ and let $\Phi$ be the Gauss map of $\phi$. Then, away of umbilic points, we have the following statements:

$(i)$ The Gauss map $\Phi$ is a Hamiltonian minimal submanifold with respect to the \emph{(\mbox{para-})}K\"ahler Einstein structure $(G,{\mathbb J})$ if and only if the immersion $\phi$ is a critical point of the functional 
\[
{\cal W}(\phi)=\int_\Sigma\sqrt{\Pi_{i=1}^n|\epsilon+k_i^2|}\; dV,
\]
where $k_1,\ldots,k_n$ are the principal curvatures of $\phi$ and $\epsilon$ denotes the length of the normal vector field of $\phi$.

$(ii)$ For $n=2$, the Gauss map $\Phi$ is a Hamiltonian minimal surface in $(L^{\pm}({\mathbb S}^3_{p,1}),G',{\mathbb J}')$ if and only if the surface $\phi$ is a critical point of the functional 
\[
{\cal W}'(\phi)=\int_{\Sigma}|k_1-k_2|\; dA,
\]
where $k_1,k_2$ denote the principal curvatures of $\phi$.
\end{theorem}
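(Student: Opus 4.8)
The plan is to transfer the Hamiltonian-minimality condition for $\Phi$ in $\mathbb{L}(\mathbb{S}^{n+1}_{p,1})$ into an ordinary variational problem for $\phi$ on $\Sigma$, by showing that the volume of the Gauss map, measured with $G$ (resp.\ with $G'$ when $n=2$), is precisely the functional $\mathcal{W}$ (resp.\ $\mathcal{W}'$). First I would invoke Theorem~\ref{t:theo1}: every smooth one-parameter deformation $\phi_t$ of $\phi$ produces a Hamiltonian variation $\Phi_t$ of $\Phi$. For the converse direction I would observe that the normal bundle of $\phi$ is a line bundle, so a normal deformation of $\phi$ is encoded by a single function $f\in C^{\infty}_{c}(\Sigma)$, and an inspection of the proof of Theorem~\ref{t:theo1} shows that the Hamiltonian potential $u$ of the induced variation $\Phi_t$ equals $f$ up to a nonvanishing factor; hence $f\mapsto u$ is onto and every Hamiltonian variation of $\Phi$ with compactly supported potential is realized by a deformation of $\phi$ (tangential deformations of $\phi$ merely reparametrize $\Phi$). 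Consequently $\Phi$ is $H$-minimal if and only if $t\mapsto\mathrm{Vol}(\Phi_t)$ is stationary at $t=0$ for every deformation of $\phi$.

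Next I would compute the metric that $\Phi$ pulls back from $G$. Since $\phi$ is real diagonalizable, away from umbilic points I can fix a (pseudo-)orthonormal principal frame $e_1,\dots,e_n$ of $\phi^{*}g$ with $Ae_i=k_ie_i$. Using the explicit form of the invariant Einstein metric $G$ on $\mathbb{L}(\mathbb{S}^{n+1}_{p,1})$ together with the description of $d\Phi$ in terms of the Jacobi fields along the normal geodesic of $\phi$, one finds that $\{e_i\}$ is also orthogonal for $\Phi^{*}G$, with
\[
(\Phi^{*}G)(e_i,e_i)=\eta_i\,(\epsilon+k_i^{2}),
\]
where $\eta_i\in\{-1,1\}$ is a sign determined by the causal character of $e_i$ and of the normal direction. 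Hence $\Phi^{*}G$ is non-degenerate exactly off the umbilic locus and
\[
dV_{\Phi^{*}G}=\sqrt{\textstyle\prod_{i=1}^{n}|\epsilon+k_i^{2}|}\;dV_{\phi^{*}g},
\qquad\text{so}\qquad \mathrm{Vol}_{G}(\Phi)=\mathcal{W}(\phi).
\]
The same computation with the conformally flat metric $G'$ in the case $n=2$ yields $dV_{\Phi^{*}G'}=|k_1-k_2|\,dA_{\phi^{*}g}$, i.e.\ $\mathrm{Vol}_{G'}(\Phi)=\mathcal{W}'(\phi)$; invoking the Gauss equation $K=1+k_1k_2$ for $\mathbb{S}^{3}$ (resp.\ $K=-1+k_1k_2$ for $\mathbb{H}^{3}$) rewrites this as $2\int_{\Sigma}\sqrt{H^{2}-K+1}\,dA$ (resp.\ $2\int_{\Sigma}\sqrt{H^{2}-K-1}\,dA$), matching the functionals of the abstract.

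Combining the two steps proves both assertions: by Step~1, $\Phi$ is $H$-minimal for $(G,\mathbb{J})$ precisely when $t\mapsto\mathrm{Vol}_{G}(\Phi_t)=\mathcal{W}(\phi_t)$ is critical at $t=0$ among all deformations, i.e.\ when $\phi$ is a critical point of $\mathcal{W}$, and likewise for $(G',\mathbb{J}')$ and $\mathcal{W}'$ when $n=2$. I expect the main obstacle to be Step~2: deriving the formula for $\Phi^{*}G$ uniformly across the several signatures — tracking the length $\epsilon=\pm1$ of the unit normal, the signs $\eta_i$, and the absolute values needed so that $|\epsilon+k_i^{2}|$ is the correct positive density even when the ambient or normal metric is indefinite — and checking that the degeneracy set of $\Phi^{*}G$ is exactly the umbilic set, so that $\mathcal{W}$ and $\mathcal{W}'$ are genuine volume functionals on its complement. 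The surjectivity claim in Step~1 is routine but should be recorded, so that "critical point of $\mathcal{W}$" and "$H$-minimal" genuinely correspond.
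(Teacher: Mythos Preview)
Your approach is essentially the paper's own: compute $\Phi^{*}G$ (resp.\ $\Phi^{*}G'$) in a principal frame via the formula from \cite{An1}, identify $\mathrm{Vol}_{G}(\Phi_t)=\mathcal{W}(\phi_t)$ (resp.\ $\mathrm{Vol}_{G'}(\Phi_t)=\mathcal{W}'(\phi_t)$), and then use Theorem~\ref{t:theo1} to match Hamiltonian variations of $\Phi$ with deformations of $\phi$. Your explicit surjectivity argument (that the potential $u$ equals $-f$, so every compactly supported $u$ arises from a normal deformation) is a point the paper leaves implicit; one small correction to your anticipated obstacle: the degeneracy locus of $\Phi^{*}G$ is where some $\epsilon+k_i^{2}=0$ (only possible when $\epsilon=-1$), not the umbilic set---it is $\Phi^{*}G'$ whose degeneracy is exactly $k_1=k_2$.
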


\vspace{0.2in}

\noindent {\bf Acknowledgements.} The authors would like to thank H. Anciaux and Martin A. Magid for their helpful and valuable suggestions and comments.

\vspace{0.2in}

\section{Preliminaries}

For $n\geq 1$, consider the Euclidean space ${\mathbb R}^{n+2}$ endowed with the canonical pseudo-Riemannian metric of signature $(p,n+2-p)$, where $0\leq p\leq n+2$:
\[
\left<\cdot,\cdot\right>_p=-\sum_{i=1}^pdx_i^2+\sum_{i=p+1}^{n+2}dx_i^2.
\]
Define the $(n+1)$-dimensional real space form
\[
{\mathbb S}^{n+1}_{p,1}=\{x\in {\mathbb R}^{n+2}|\left<x,x\right>_p=1 \},
\]
and let $\iota:{\mathbb S}^{n+1}_{p,1}\hookrightarrow ({\mathbb R}^{n+2},\left<\cdot,\cdot\right>_p)$ be the canonical inclusion. The induced metric $\iota^{\ast}\left<.,.\right>_p$ has signature $(p,n+1-p)$ and is of constant sectional curvature $K=1$. 

Following the notations of \cite{An1} we denote by $L^+({\mathbb S}^{n+1}_{p,1})$ (resp. $L^-({\mathbb S}^{n+1}_{p,1})$) the set of spacelike (resp. timelike) oriented geodesics of ${\mathbb S}^{n+1}_{p,1}$, that is, 
\[
L^{\pm}({\mathbb S}^{n+1}_{p,1})=\{x\wedge y\in \Lambda^2({\mathbb R}^{n+2})\; |\; y\in T_x{\mathbb S}^{n+1}_{p,1}, \left<y,y\right>_p=\epsilon\},
\]
where $\epsilon=1$ (resp. $\epsilon=-1$) corresponds to $L^+({\mathbb S}^{n+1}_{p,1})$ (resp. $L^-({\mathbb S}^{n+1}_{p,1})$). If $\Lambda^2({\mathbb R}^{n+2})$ is equipped with the flat pseudo-Riemannian metric:
\[
\left<\left<x_1\wedge y_1,x_2\wedge y_2\right>\right>=\left<x_1,x_2\right>_p\left< y_1,y_2\right>_p-\left<x_1,y_2\right>_p\left<x_2, y_1\right>_p,
\]
we denote by $G$ the metric $\left<\left<\cdot,\cdot\right>\right>$ induced by the inclusion map $i:L^{\pm}({\mathbb S}^{n+1}_{p,1})\hookrightarrow \Lambda^2({\mathbb R}^{n+2})$. On the other hand, in $L^{\pm}({\mathbb S}^{n+1}_{p,1})$ can be defined a complex (paracomplex) structure ${\mathbb J}$ as follows:

Let $J$ be the canonical complex (paracomplex) structure  in the oriented plane $x\wedge y\in L^{\pm}({\mathbb S}^{n+1}_{p,1})$ defined by $Jx=y$ and $Jy=-\epsilon x$. Thus, $J^2=-\epsilon Id$. A tangent vector to $i(L^{\pm}({\mathbb S}^{n+1}_{p,1}))$ at the point $x\wedge y$ is of the form $x\wedge X+y\wedge Y$, where $X,Y\in (x\wedge y)^{\bot}$ in $\Lambda^2({\mathbb R}^{n+2})$. The complex (paracomplex) structure ${\mathbb J}$ is defined by:
\[
{\mathbb J}(x\wedge X+y\wedge Y):=(Jx)\wedge X+(Jy)\wedge Y=y\wedge X+\epsilon x\wedge Y.
\]
The metric $G$ and the (para) complex structure ${\mathbb J}$ are invariant under the natural action of the isometry group of ${\mathbb S}^{n+1}_{p,1}$. For $n\geq 3$, it has been showed  in \cite{AGK} that $G$ is the unique invariant metric under the natural action of $SO(n+2-p,p)$.

The 2-form $\Omega$ defined by $\Omega(\cdot,\cdot)=\epsilon G({\mathbb J}\cdot,\cdot)$, is a symplectic structure on $L^{\pm}({\mathbb S}^{n+1}_{p,1})$ and in particular:
\begin{proposition}\emph{\cite{An1}}
The quadraple $(L^+({\mathbb S}^{n+1}_{p,1}), G,{\mathbb J},\Omega)$ is a $2n$-dimensional K\"ahler manifold with signature $(2p,2n-2p)$, while $(L^-({\mathbb S}^{n+1}_{p,1}), G,{\mathbb J},\Omega)$ is a $2n$-dimensional  para-K\"ahler manifold. In both cases, the metric $G$ is Einstein with constant scalar curvature $S=2\epsilon n^2$.
\end{proposition}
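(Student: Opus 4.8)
The plan is to work directly with the concrete model $i:L^{\pm}({\mathbb S}^{n+1}_{p,1})\hookrightarrow\Lambda^2({\mathbb R}^{n+2})$ and to read off every assertion from the second fundamental form of $i$, which turns out to be very simple. At a point $x\wedge y$ one has the $\langle\langle\cdot,\cdot\rangle\rangle$-orthogonal splitting
\[
\Lambda^2({\mathbb R}^{n+2})\;=\;{\mathbb R}\,(x\wedge y)\;\oplus\;(x\wedge W_0)\;\oplus\;(y\wedge W_0)\;\oplus\;\Lambda^2(W_0),\qquad W_0:=\{x,y\}^{\perp},
\]
in which the two middle summands make up $T_{x\wedge y}L^{\pm}$ and the two outer ones the normal space. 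Writing a tangent vector as $V=x\wedge\alpha+y\wedge\beta$ with $\alpha,\beta\in W_0$, the relations $\langle x,x\rangle_p=1$, $\langle y,y\rangle_p=\epsilon$, $\langle x,y\rangle_p=0$ give immediately $G(V,V)=\langle\alpha,\alpha\rangle_p+\epsilon\,\langle\beta,\beta\rangle_p$. Since $\langle\cdot,\cdot\rangle_p|_{W_0}$ has signature $(p,n-p)$ when $\epsilon=1$ and $(p-1,n+1-p)$ when $\epsilon=-1$, this formula gives signature $(2p,2n-2p)$ for $G$ on $L^+$ and the neutral signature $(n,n)$ on $L^-$. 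The same formula yields $G({\mathbb J}V,{\mathbb J}V)=\epsilon\,G(V,V)$ and, after polarisation, that $\Omega=\epsilon\,G({\mathbb J}\cdot,\cdot)$ is skew and non-degenerate; together with ${\mathbb J}^2=-\epsilon\,\mathrm{Id}$ this is the almost-(para-)Hermitian part of the statement.

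To promote this to the (para-)K\"ahler property it suffices to show $\nabla{\mathbb J}=0$ for the Levi-Civita connection $\nabla$ of $G$ (a parallel $\Omega$ is closed, and a parallel ${\mathbb J}$ has vanishing Nijenhuis tensor). The point is that ${\mathbb J}$ is the restriction to $TL^{\pm}$ of the field of ambient endomorphisms ${\mathcal J}:=D_A$, where $A_{x\wedge y}=\langle\cdot,x\rangle_p\,y-\langle\cdot,y\rangle_p\,x$ and $D_A$ is its derivation extension $D_A(u\wedge v)=Au\wedge v+u\wedge Av$ to $\Lambda^2({\mathbb R}^{n+2})$. A short check shows two things: ${\mathcal J}$ annihilates the normal space ${\mathbb R}(x\wedge y)\oplus\Lambda^2(W_0)$, and for $W$ tangent $(\bar\nabla_V{\mathcal J})(W)=D_{\bar\nabla_V A}(W)$ is again normal. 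Substituting into the Gauss formula $\bar\nabla_V W=\nabla_V W+h(V,W)$ and taking tangential parts gives $\nabla_V({\mathbb J}W)={\mathbb J}\nabla_V W$, i.e. $\nabla{\mathbb J}=0$.

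For the Einstein statement I would first compute $h$. Differentiating $t\mapsto x(t)\wedge y(t)$ twice along a curve realising $V$ and using the defining constraints of $L^{\pm}$, the normal part of the second derivative works out to
\[
h(V,W)\;=\;-\,\epsilon\,G(V,W)\,(x\wedge y)\;+\;\bigl(\alpha_V\wedge\beta_W+\alpha_W\wedge\beta_V\bigr),
\]
the first term radial and the second valued in $\Lambda^2(W_0)$. As the ambient metric on $\Lambda^2({\mathbb R}^{n+2})$ is flat, the Gauss equation expresses $R^{L}$ in terms of $h$; since $\langle\langle x\wedge y,x\wedge y\rangle\rangle=\epsilon$, the radial term contributes exactly the curvature tensor of a space form of curvature $\epsilon$, and the $\Lambda^2(W_0)$-term a correction. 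Taking a pseudo-orthonormal basis $f_1,\dots,f_n$ of $W_0$ and the induced pseudo-orthonormal frame $\{x\wedge f_i,\;y\wedge f_i\}$ of $TL^{\pm}$, one checks that $h(e_a,e_a)$ is radial for every frame vector, so the first contraction in the Gauss equation collapses and
\[
\mathrm{Ric}(Y,Z)=(2n-1)\,\epsilon\,G(Y,Z)-\Psi(Y,Z),\qquad \Psi(Y,Z):=\sum_a\eta_a\,\langle\langle\,\alpha_{e_a}\wedge\beta_Y+\alpha_Y\wedge\beta_{e_a}\,,\ \alpha_{e_a}\wedge\beta_Z+\alpha_Z\wedge\beta_{e_a}\,\rangle\rangle,
\]
with $\eta_a=G(e_a,e_a)=\pm1$. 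A short calculation using $\langle\langle a\wedge b,c\wedge d\rangle\rangle=\langle a,c\rangle_p\langle b,d\rangle_p-\langle a,d\rangle_p\langle b,c\rangle_p$ gives $\Psi=\epsilon(n-1)G$, hence $\mathrm{Ric}=n\,\epsilon\,G$, so $G$ is Einstein, and $S=\mathrm{tr}_G\mathrm{Ric}=2\epsilon n^2$.

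The computations of $G$ and ${\mathbb J}$ and the extraction of $h$ are essentially bookkeeping once the tangent/normal splitting is in place; the substantive step is the curvature calculation. The two delicate points there are (a) isolating the two pieces of $h$ and checking that $h(e_a,e_a)$ stays radial on a frame, and (b) performing the two contractions honestly in indefinite signature — keeping exact track of the signs $\eta_a$ — so that the cancellation $(2n-1)-(n-1)=n$, and hence the precise value $S=2\epsilon n^2$, really emerges.
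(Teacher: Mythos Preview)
The paper does not supply a proof of this proposition: it is quoted verbatim as a result of Anciaux \cite{An1} and used as background in the Preliminaries section, so there is no in-paper argument to compare against.

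That said, your proposal is a coherent and essentially correct self-contained proof. The tangent/normal splitting of $\Lambda^2({\mathbb R}^{n+2})$ at $x\wedge y$, the signature count, the verification that ${\mathbb J}$ is parallel via an ambient extension, and the Gauss-equation computation of the Ricci tensor are all sound; in particular your claimed identities $\Psi=\epsilon(n-1)G$ and $\mathrm{Ric}=n\epsilon G$ check out, giving $S=2\epsilon n^2$. One small point worth tightening: your assertion that ``$h(e_a,e_a)$ is radial for every frame vector'' is specific to the split frame $\{x\wedge f_i,\,y\wedge f_i\}$ (where the $\Lambda^2(W_0)$-part $2\,\alpha_{e_a}\wedge\beta_{e_a}$ indeed vanishes) and should be stated that way rather than as a general fact. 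With that clarification the argument is complete; it is presumably close in spirit to what Anciaux does in \cite{An1}, since the Grassmannian embedding into $\Lambda^2$ is the natural model there as well.
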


We now consider the case of $L^{\pm}({\mathbb S}^3_{p,1})\subset \Lambda^2({\mathbb R}^4)$. The orthogonal $(x\wedge y)^{\bot}$ of an oriented plane $x\wedge y\in L^{\pm}({\mathbb S}^3_{p,1})$,  is also a plane in ${\mathbb R}^4$ and is oriented in such a way its orientation is combatible with the orientation of the plane $x\wedge y$. Then it is possible to define a canonical complex or paracomplex structure $J'$, depending of whether the metric $\left<\left<.,.\right>\right>$ induced on $(x\wedge y)^{\bot}$ is positive or indefinite. In this case, we may define a complex or paracomplex structure ${\mathbb J}'$ on $L^{\pm}({\mathbb S}^3_{p,1})$ by
\[
{\mathbb J}'(x\wedge X+y\wedge Y):=x\wedge (J'X)+y\wedge (J'Y).
\]
The pseudo-Riemannian metric $G'$ on $L^{\pm}({\mathbb S}^3_{p,1})$ is given by:
\[
G'(\cdot,\cdot):=\Omega(\cdot,J'\cdot)=-\epsilon G(\cdot,{\mathbb J}\circ {\mathbb J}'\cdot).
\]
Furthermore,
\begin{proposition}\emph{\cite{An1}}
The quadraples $(L^{\pm}({\mathbb S}^3_{p,1}), G',{\mathbb J}',\Omega)$ are $4$-dimensional \emph{(}para-\emph{)} K\"ahler manifolds. The metric $G'$ is of neutral signature $(2,2)$, scalar flat, locally conformally flat and is  invariant under the natural action of $SO(4-p,p)$.
\end{proposition}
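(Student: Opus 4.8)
The plan is to reduce everything to linear algebra at a single point $x\wedge y$ and then propagate by homogeneity. First I would fix $x\wedge y\in L^{\pm}({\mathbb S}^3_{p,1})$, choose $u_1\in(x\wedge y)^{\bot}$, set $u_2:=J'u_1$, and take as a frame of $T_{x\wedge y}L^{\pm}({\mathbb S}^3_{p,1})$ the four vectors $x\wedge u_1,\ x\wedge u_2,\ y\wedge u_1,\ y\wedge u_2$. Since the $u_i$ lie in $(x\wedge y)^{\bot}$ and $\langle x,y\rangle_p=0$, the wedge metric collapses to $G(x\wedge X_1+y\wedge Y_1,\,x\wedge X_2+y\wedge Y_2)=\langle X_1,X_2\rangle_p+\epsilon\langle Y_1,Y_2\rangle_p$. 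Combining $\Omega(\cdot,\cdot)=\epsilon G({\mathbb J}\cdot,\cdot)$ with the definition $G'(\cdot,\cdot)=\Omega(\cdot,{\mathbb J}'\cdot)$, a direct computation gives $G'$ the form $\langle X_1,J'Y_2\rangle_p-\langle Y_1,J'X_2\rangle_p$ (up to the sign conventions for $J$ and $J'$). Written out in the chosen frame, the Gram matrix of $G'$ splits into two hyperbolic $2\times 2$ blocks, which shows at once that $G'$ is nondegenerate of neutral signature $(2,2)$.

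For the (para-)K\"ahler assertion I would record three algebraic facts. First, ${\mathbb J}'^2=\mp\,\mathrm{Id}$ according to whether $J'$ is complex or paracomplex, that is whether $(x\wedge y)^{\bot}$ is definite or indefinite for $\langle\cdot,\cdot\rangle_p$. Second, the skew-adjointness of $J'$ for $\langle\cdot,\cdot\rangle_p$ translates, through the formula above, precisely into the symmetry of $G'$; equivalently ${\mathbb J}'$ lies in the symplectic algebra of $\Omega$, so that $\Omega(\cdot,\cdot)=\pm G'({\mathbb J}'\cdot,\cdot)$ is a genuine compatibility relation. Third, $\Omega$ is closed, being the symplectic form of the preceding proposition. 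It then remains to prove integrability of ${\mathbb J}'$, i.e.\ vanishing of its Nijenhuis tensor, which I would obtain from the naturality of the construction: $J'$ is the canonical structure on the normal planes $(x\wedge y)^{\bot}$, and its integrability follows from the flatness of the ambient connection on $\Lambda^2({\mathbb R}^4)$ together with the parallelism of the splitting into $x\wedge y$ and its orthogonal complement along the fibres. Closedness of $\Omega$, compatibility with $G'$, and integrability of ${\mathbb J}'$ together yield the (para-)K\"ahler structure.

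The substantive point is the curvature, and here I would use the transitive action of $SO(4-p,p)$ on each of $L^{\pm}({\mathbb S}^3_{p,1})$: since $G'$, ${\mathbb J}'$ and $\Omega$ are all manufactured from $\langle\cdot,\cdot\rangle_p$, the wedge product, and the orthogonal-complement/orientation data, they are $SO(4-p,p)$-invariant, so the curvature tensor is determined by its value at the base point and the isotropy representation. Introducing an explicit local section along $x\wedge y$ and the induced coordinates, I would compute the Levi-Civita connection and the Riemann tensor $R'$ of $G'$ in the frame above, contract to obtain the scalar curvature, and extract the Weyl tensor. A useful organizing observation is that the commuting pair $({\mathbb J},{\mathbb J}')$ produces an operator ${\mathbb K}={\mathbb J}\circ{\mathbb J}'$ with ${\mathbb K}^2=\pm\,\mathrm{Id}$. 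In the cases where ${\mathbb K}^2=\mathrm{Id}$ — for instance $L^+({\mathbb S}^3)$, which is the oriented Grassmannian $\widetilde{Gr}_2({\mathbb R}^4)\cong S^2\times S^2$ — the eigendistributions of ${\mathbb K}$ are parallel and $G'$ splits locally as a product of two surfaces of constant Gaussian curvatures $c$ and $-c$; such a product has scalar curvature $2(c-c)=0$ and vanishing Weyl tensor, hence is simultaneously scalar-flat and locally conformally flat. In the remaining cases (where ${\mathbb K}^2=-\mathrm{Id}$, e.g.\ $L^-({\mathbb H}^3)$) the same frame computation gives the vanishing of the scalar curvature and of $W$ directly. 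Since in dimension four locally conformally flat is equivalent to $W\equiv 0$ by the Weyl--Schouten theorem, this establishes both remaining properties.

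Finally, invariance under $SO(4-p,p)$ is immediate: every linear isometry $A$ of $({\mathbb R}^4,\langle\cdot,\cdot\rangle_p)$ preserves the wedge metric $\langle\langle\cdot,\cdot\rangle\rangle$, sends oriented geodesics to oriented geodesics, and commutes with the canonical structures $J$ and $J'$ on the planes $x\wedge y$ and $(x\wedge y)^{\bot}$; hence $A_*$ preserves $G$, ${\mathbb J}$, ${\mathbb J}'$ and $\Omega$, and therefore $G'$. The hard part will be the curvature step: pinning down the Levi-Civita connection of the neutral metric $G'$ in closed form and verifying the vanishing of both the scalar curvature and the full Weyl tensor is where the real work lies, and the sign bookkeeping between the K\"ahler case $\epsilon=1$ and the para-K\"ahler case $\epsilon=-1$ — and between complex and paracomplex $J'$ — must be tracked carefully throughout.
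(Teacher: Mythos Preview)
The paper does not prove this proposition at all: it is stated with the citation \cite{An1} and is taken as a quoted result from Anciaux's paper, with no argument given. So there is nothing in the present paper to compare your proposal against; any comparison would have to be with \cite{An1} itself.

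That said, as a standalone sketch your outline is reasonable but has two soft spots. First, the integrability of ${\mathbb J}'$ is not adequately justified: the phrase ``parallelism of the splitting into $x\wedge y$ and its orthogonal complement along the fibres'' is not literally true, since that splitting varies with the base point $x\wedge y$, and flatness of the ambient $\Lambda^2({\mathbb R}^4)$ alone does not give you vanishing of the Nijenhuis tensor of ${\mathbb J}'$. In \cite{An1} this is handled via an explicit identification of $L^{\pm}({\mathbb S}^3_{p,1})$ with a product of quadrics (e.g.\ ${\mathbb S}^2\times{\mathbb S}^2$, ${\mathbb H}^2\times{\mathbb H}^2$, or $d{\mathbb S}^2\times d{\mathbb S}^2$) using the Hodge star on $\Lambda^2({\mathbb R}^4)$, after which ${\mathbb J}'$ becomes the standard product (para-)complex structure and integrability is automatic. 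Second, your curvature strategy via the commuting pair $({\mathbb J},{\mathbb J}')$ and the eigendistributions of ${\mathbb K}={\mathbb J}\circ{\mathbb J}'$ is headed in the right direction, but you still need to check that those eigendistributions are $G'$-parallel (not merely $G$-parallel), and that the factors carry curvatures of opposite sign for $G'$; again, in \cite{An1} this drops out of the explicit product description, where $G'$ is the difference of the two factor metrics while $G$ is their sum. Once you have that product form, scalar-flatness and local conformal flatness are immediate as you indicate.
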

Let $\phi:\Sigma^n\rightarrow {\mathbb S}^{n+1}_{p,1}$ be an immersion of a $n$-dimensional orientable manifold into the real space form ${\mathbb S}^{n+1}_{p,1}$ and let $N$ be the unit normal vector of the hypersurface $S=\phi(\Sigma)$. The set $\bar S$ of geodesics that are orthogonal to $S$, oriented in the direction of $N$, is called \emph{the normal congruence} or \emph{the Gauss map} of $S$. 
Then,
\begin{proposition}\emph{\cite{An1}}
Let $\phi$ be an immersion of an orientable manifold $\Sigma^n$ in ${\mathbb S}^{n+1}_{p,1}$ with unit normal vector field $N$. Then the Gauss map of $S=\phi(\Sigma)$ is the image of the map $\Phi:\Sigma^n\rightarrow L^{\pm}({\mathbb S}^{n+1}_{p,1})$ defined by $\Phi=\phi\wedge N$. When $\Phi$ is an immersion, it is Lagrangian. Conversely, let $\Phi:\Sigma^n\rightarrow L^{\pm}({\mathbb S}^{n+1}_{p,1})$ be an immersion of a simply connected $n$-manifold. Then $\bar S:=\Phi(\Sigma)$ is the Gauss map of an immersed hypersurface of ${\mathbb S}^{n+1}_{p,1}$ if and only if $\Phi$ is Lagrangian.
\end{proposition}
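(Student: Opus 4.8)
The plan is to use throughout the identification of an oriented geodesic of ${\mathbb S}^{n+1}_{p,1}$ with its bivector $x\wedge y$. For $\langle y,y\rangle_p=\epsilon$ with $\epsilon=1$ (resp.\ $\epsilon=-1$) the geodesic through $x$ with initial velocity $y$ is $\gamma(t)=\cos t\,x+\sin t\,y$ (resp.\ $\gamma(t)=\cosh t\,x+\sinh t\,y$), and in either case $\gamma(t)\wedge\gamma'(t)=x\wedge y$ for all $t$, so the bivector is a genuine invariant of the oriented geodesic. Given a hypersurface $S=\phi(\Sigma)$ with unit normal $N$, the geodesic orthogonal to $S$ at $\phi(s)$ and pointing in the direction $N(s)$ has initial data $\bigl(\phi(s),N(s)\bigr)$, hence is represented by $\phi(s)\wedge N(s)$; this gives the first assertion, that the normal congruence of $S$ is the image of $\Phi=\phi\wedge N$.

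For the Lagrangian claim I would differentiate in a local frame $\{e_i\}$, writing $d\Phi(e_i)=\phi_i\wedge N+\phi\wedge N_i$ with $\phi_i:=d\phi(e_i)$, $N_i:=dN(e_i)$. Because $\langle\phi,\phi\rangle_p=1$, $\langle\phi,N\rangle_p=0$ and $\langle N,N\rangle_p=\epsilon$ are constant and $\langle\phi_i,N\rangle_p=0$ (as $N$ is normal to $S$), both $\phi_i$ and $N_i$ lie in $(\phi\wedge N)^{\perp}$, and $N_i=-A\phi_i$ for the self-adjoint shape operator $A$ of $S$. Putting $d\Phi(e_i)$ in the canonical form $\phi\wedge X_i+N\wedge Y_i$ and applying ${\mathbb J}$ through $J\phi=N$, $JN=-\epsilon\phi$ together with $\Omega(\cdot,\cdot)=\epsilon G({\mathbb J}\cdot,\cdot)$, the bilinear expansion collapses to
\[
\Phi^{\ast}\Omega(e_i,e_j)=\langle A\phi_i,\phi_j\rangle_p-\langle A\phi_j,\phi_i\rangle_p,
\]
which vanishes precisely because $A$ is self-adjoint, i.e.\ because the second fundamental form is symmetric. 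Hence whenever $\Phi$ is an immersion it is Lagrangian.

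For the converse I would run this reconstruction backwards. Writing $\Phi=x\wedge y$ with $\langle x,x\rangle_p=1$, $\langle y,y\rangle_p=\epsilon$, $\langle x,y\rangle_p=0$, every hypersurface whose normal congruence is $\bar S$ must be of the form $\phi=\cos\theta\,x+\sin\theta\,y$ (resp.\ $\cosh\theta\,x+\sinh\theta\,y$) with prospective unit normal $N=\gamma_s'(\theta)$, and one checks $\phi\wedge N=x\wedge y=\Phi$ identically. Setting the connection $1$-form $\alpha:=\langle dx,y\rangle_p$, a direct differentiation shows that $N$ is orthogonal to $S$, i.e.\ $\langle d\phi(e_i),N\rangle_p=0$, if and only if $d\theta=-\epsilon\,\alpha$. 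Moreover the same bilinear computation as above, now for general $x,y$, identifies the symplectic pullback as $\Phi^{\ast}\Omega=-d\alpha$. Therefore $\Phi$ is Lagrangian if and only if $\alpha$ is closed; and since $\Sigma$ is simply connected the Poincaré lemma yields $\theta$ with $d\theta=-\epsilon\,\alpha$, producing a hypersurface $\phi$ whose unit normal is $N$ and whose Gauss map is exactly $\Phi$.

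The step I expect to be the crux is the bilinear bookkeeping leading to the clean identity $\Phi^{\ast}\Omega=-d\alpha$: one must first strip off the components of $x_i,y_i$ along $x$ and $y$ before invoking the formula for ${\mathbb J}$, and carry the factor $\epsilon$ so that the Riemannian ($\epsilon=1$) and neutral ($\epsilon=-1$) signatures are treated uniformly. A secondary point is to confirm that the reconstructed $\phi$ is genuinely an immersion: since $\Phi$ is assumed to be an immersion and $\theta$ solves $d\theta=-\epsilon\,\alpha$, one verifies that $d\phi$ is injective wherever the induced metric is nondegenerate, so that $N$ is a bona fide unit normal and $\bar S=\Phi(\Sigma)$ is the normal congruence of $S=\phi(\Sigma)$.
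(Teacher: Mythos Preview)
The paper does not supply its own proof of this proposition: it is quoted verbatim as a result of Anciaux \cite{An1}, and no argument is given here. Your proposal is essentially the proof that appears in \cite{An1}: the forward direction reduces $\Phi^{\ast}\Omega$ to the antisymmetrization of the second fundamental form via the Weingarten relation $N_i=-A\phi_i$, and the converse introduces the connection $1$-form $\alpha=\langle dx,y\rangle_p$, identifies $\Phi^{\ast}\Omega$ with $\pm d\alpha$, and invokes the Poincar\'e lemma on the simply connected $\Sigma$ to produce the primitive $\theta$ parametrising the sought hypersurface along each geodesic. So there is nothing to compare against in the present paper, and your outline matches the original source.

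One small caution on the bookkeeping you flag yourself: in the converse you must indeed split $x_i$ and $y_i$ into their components along $x,y$ and along $(x\wedge y)^{\perp}$ before applying the formula for ${\mathbb J}$, since that formula is only stated for $X,Y\in(x\wedge y)^{\perp}$. Using $\langle x_i,y\rangle_p=-\langle x,y_i\rangle_p=\alpha(e_i)$ and $\langle x_i,x\rangle_p=\langle y_i,y\rangle_p=0$, the tangential pieces contribute a multiple of $x\wedge y$ to $d\Phi(e_i)$ which ${\mathbb J}$ sends to another multiple of $x\wedge y$, and these drop out of $G({\mathbb J}\cdot,\cdot)$ against the remaining terms; the surviving cross terms give exactly $d\alpha$ up to the sign $\epsilon$. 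Your sketch is correct once this is carried through.
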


\vspace{0.2in}

\section{Hamiltonian minimal submanifolds}

Throughout the article, when we talk about hypersurfaces in ${\mathbb S}^{n+1}_{p,1}$ or about Lagrangian submanifolds in $L^{\pm}({\mathbb S}^{n+1}_{p,1})$, we mean that the induced metric is non-degenerate.

\subsection{Hamiltonian variations in $L^{\pm}({\mathbb S}^{n+1}_{p,1})$}

Consider the $2n$-dimensional symplectic manifold $(L^{\pm}({\mathbb S}^{n+1}_{p,1}),\Omega)$, where $L^{\pm}({\mathbb S}^{n+1}_{p,1})$ denotes the space of oriented geodesics in the real space form ${\mathbb S}^{n+1}_{p,1}$. Then we prove our first main result:

{\indent\sc Proof of Theorem \ref{t:theo1}:} Let $\phi_t:\Sigma^n\rightarrow {\mathbb S}^{n+1}_{p,1}$, where $t\in (-t_0,t_0)$ for some $t_0>0$, be a smooth variation of an immersion $\phi:=\phi_0:\Sigma^n\rightarrow {\mathbb S}^{n+1}_{p,1}$ of an oriented $n$-dimensional manifold $\Sigma$ in ${\mathbb S}^{n+1}_{p,1}$. Let $S_t:=\phi_t(\Sigma)$ be the hypersurfaces of ${\mathbb S}^{n+1}_{p,1}$ and $S:=\phi(\Sigma)$.

We denote by $N_t$ the 1-parameter family of vector fields such that $N_0=N$ and $\left<N_t,\phi_t\right>_p=0$. Following the notation of \cite{Palmer1}, there exist a smooth function $f$ on $S$ and a smooth section $Y$ of the tangent bundle $TS$  such that
\[
\dot\phi=f N+Y,
\]
where $\dot\phi=\partial_t\phi_t|_{t=0}$. Derivating the expression $\left<N_t,\phi_t\right>_p=0$ with respect to $t$ at $t=0$, we have 
\[
\big<\dot N,\phi\big>_p=-\big<N,\dot\phi\big>_p.
\]
Then, it yields
\[
\big<\dot N,\phi\big>_p=-\epsilon f,
\]
where $\epsilon=\big<N,N\big>_p$. For $X_t\in TS_t$, where $X_0:=X\in TS$, we also derive with respect to $t$ the expression $\left<N_t,X_t\right>_p=0$ at $t=0$, and we obtain 
\[
\big<\dot N,X\big>_p=-\epsilon df(X)+\left<dN(Y),X\right>_p,
\]
which finally gives
\[
\dot N=-\epsilon\nabla f+dN(Y)-\epsilon f\phi.
\]
 Let $\Phi:\Sigma^n\rightarrow L^{\pm}({\mathbb S}^{n+1}_{p,1}):x\mapsto \phi(x)\wedge N(x)$ be the Gauss map of the immersion $\phi:\Sigma^n\rightarrow {\mathbb S}^{n+1}_{p,1}$. If $\bar X:=d\Phi(X)$, from  \cite{An1}, we have that
\[
\bar X=X\wedge N+AX\wedge \phi.
\]
Let $\dot{\Phi}=\partial_t\Phi_t|_{t=0}$ be the velocity of the variation $\Phi_t$ and write $\dot{\Phi}=\dot{\Phi}^{\top}+\dot{\Phi}^{\bot}$, where $\dot{\Phi}^{\top}$ and $\dot{\Phi}^{\bot}$ denote the tangential and the normal component of $\dot{\Phi}$ respectively. Then,
\begin{eqnarray}
G(\dot{\Phi},{\mathbb J}\bar X)&=&G(Y\wedge N+\epsilon\nabla f\wedge\phi-dN(Y)\wedge\phi,\; {\mathbb J}(X\wedge N+AX\wedge\phi))\nonumber \\
&=&-df(X),\nonumber
\end{eqnarray}
which implies that 
\[
\dot{\Phi}^{\bot}=-{\mathbb J}\overline\nabla f,
\]
and this completes the Theorem.
$\Box$

\subsection{Applications}

A \emph{real diagonalizable immersion} is a smooth immersion $\phi$ of a hypersurface $\Sigma^n$ into the (n+1)-dimensional real space form ${\mathbb S}^{n+1}_{p,1}$ such that, locally, the shape operator $A$ can be diagonalized, that is, it is existed a local orthonormal frame $(e_1,\ldots, e_n)$ and real functions $k_1,\ldots,k_n$ where, $A=\mbox{diag}(k_1,\ldots,k_n)$.
In this case, each vector field $e_i$ is called \emph{a principal direction} with corresponded \emph{principal curvature} $k_i$. 


\begin{remark}\label{r:rem}
Note that every hypersurface in the Riemannian case,  away of umbilic points, is real diagonalizable. 
\end{remark}

We are now in position to prove our second result:

{\indent\sc Proof of Theorem \ref{c:cor1}:} Consider a smooth immersion of $\phi$ of the $n$-dimensional manifold $\Sigma$ in ${\mathbb S}^{n+1}_{p,1}$ and let $\Phi:\Sigma^n\rightarrow L^{\pm}({\mathbb S}^{n+1}_{p,1})$ be the corresponded Gauss map. The fact that $\phi$ is real diagonalizable implies the existence of an orthonormal frame $(e_1,\ldots,e_n)$, with respect to the induced metric $\phi^{\ast}g$, such that
\[
Ae_i=k_i e_i, \qquad i=1,\ldots,n,
\]
where $A$ denotes the shape operator of $\phi$. Let $(\phi_t)_{t\in (-t_0,t_0)}$ be a smooth variation of $\phi$ and $(\Phi_t)$ be the corresponded variation of the Gauss map $\Phi$. Real diagonalizability implies that the minimal polynomial of $A$ is the product of distinct linear factors. Using the fact that the variation $(\Phi_t)$ is at least $C^1$-smooth, it is possible to obtain a positive real number $t_1<t_0$ such that $\phi_t$ is real diagonalizable for every $t\in (-t_1,t_1)$. We may extend all extrinsic geometric quantities such as the shape operator $A$, the principal directions $e_i$ and the principal curvatures $k_i$ to the 1-parameter family of immersions $(\phi_t)$.
 
\vspace{0.1in}

\noindent (i) From \cite{An1}, the induced metric $\Phi_t^{\ast}G$ is given by $\Phi_t^{\ast}G=\epsilon \phi_t^{\ast}g+\phi_t^{\ast}g(A.,A.)$ and thus,
\[
\Phi^{\ast}_tG=\mbox{diag}\Big(\epsilon_1(\epsilon+k_1^2),\ldots,\epsilon_n(\epsilon+k_n^2)\Big),
\]
where $\epsilon_i=g(e_i,e_i)$. For every sufficiently small $t>0$, the volume of every Gauss map $\Phi_t$, with respect to the metric $G$, is
\begin{equation}\label{e:impeqt}
\mathop{\rm Vol}(\Phi_t)=\int_{\Sigma}\sqrt{|\mathop{\rm det}\Phi_t^{\ast}G|}dV={\cal W}(\phi_t).
\end{equation}
If $\phi$ is a critical point of the functional ${\cal W}$, we have
\[
\partial_t(\mathop{\rm Vol}(\Phi_t))=0,
\]
for any Hamiltonian variation of $\Phi$. Therefore, $\Phi$ is a Hamiltonian minimal submanifold with respect to the K\"ahler Einstein structure $(G,J)$. The converse follows directly from (\ref{e:impeqt}).

\vspace{0.1in}

\noindent (ii) Assume that $n=2$. From \cite{An1}, in terms of the orthonormal frame $(e_1,e_2)$, the induced metric $\Phi^{\ast}G'$ is  
\[
\Phi^{\ast}G'=\begin{pmatrix} 0 & \epsilon_2(k_2-k_1) \\ \epsilon_2(k_2-k_1)  & 0\end{pmatrix}.
\]
where $\epsilon_2=\phi^{\ast}g(e_2,e_2)$. Then, the volume of every Gauss map $\Phi_t$, with respect to the metric $G'$, is
\begin{equation}\label{e:impeqt1}
\mathop{\rm Vol'}(\Phi_t)=\int_{\Sigma}\sqrt{|\mathop{\rm det}\Phi_t^{\ast}G|}dV={\cal W}'(\phi_t),
\end{equation}
and thus the second statement of the Theorem follows by a similar argument with the proof of the first statement.
$\Box$

Let $\phi:\Sigma^n\rightarrow {\mathbb S}_{p,1}^{n+1}$ be a hypersurface and $N$ denotes the unit normal vector field. For $\theta\in {\mathbb R}$, consider the immersion $\phi_{\theta}:=\cos\epsilon(\theta)\phi+\sin\epsilon(\theta)N$, where $\epsilon:=|N|^2$ and $(\cos\epsilon(\theta),\sin\epsilon(\theta))=(\cos\theta,\cos\theta)$ if $\epsilon=1$ while for $\epsilon=-1$ we have $(\cos\epsilon(\theta),\sin\epsilon(\theta))=(\cosh\theta,\cosh\theta)$. The images $\phi_{\theta}(\Sigma)$ and $\phi(\Sigma)$ are called parallel hypersurfaces. It is important to mention that parallel hypersurfaces have the same Gauss map \cite{An1}.

Looking more carefully the relations (\ref{e:impeqt}) and (\ref{e:impeqt1}) we obtain the following symmetry for the functionals ${\cal W}$ and ${\cal W}'$:

\begin{corollary}
If $\phi_1$ and $\phi_2$ are parallel smooth real diagonalizable immersions of the $n$-manifold $\Sigma$ in ${\mathbb S}_{p,1}^{n+1}$ then ${\cal W}(\phi_1)={\cal W}(\phi_2)$. In the case where $n=2$, we also have that ${\cal W}'(\phi_1)={\cal W}'(\phi_2)$.
\end{corollary}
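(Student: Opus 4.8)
The plan is to read relations (\ref{e:impeqt}) and (\ref{e:impeqt1}) as the statements ${\cal W}(\phi)=\mathop{\rm Vol}(\Phi)$ and ${\cal W}'(\phi)=\mathop{\rm Vol}'(\Phi)$ --- that is, each functional is literally the volume of the associated Gauss map --- and then to combine this with the fact recalled just before the Corollary (from \cite{An1}) that parallel hypersurfaces have one and the same Gauss map. So the content is: the value of ${\cal W}$ (resp. ${\cal W}'$) depends only on the Gauss map, and the Gauss map is a parallel invariant.

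Concretely, I would first fix $\theta\in{\mathbb R}$ with $\phi_2=(\phi_1)_\theta$ in the notation introduced above, and record two routine facts. First, $(\phi_1)_\theta$ is again real diagonalizable: in the plane spanned by $\phi_1$ and its unit normal $N_1$ the pair $\big((\phi_1)_\theta, N_{(\phi_1)_\theta}\big)$ is the $\epsilon$-rotation (trigonometric for $\epsilon=1$, hyperbolic for $\epsilon=-1$) of $(\phi_1,N_1)$, so $(\phi_1)_\theta$ keeps the principal frame $(e_1,\dots,e_n)$ of $\phi_1$ and merely replaces each $k_i$ by a Möbius transform of it; in particular relation (\ref{e:impeqt}) applies to both $\phi_1$ and $\phi_2$, giving ${\cal W}(\phi_1)=\mathop{\rm Vol}(\Phi_1)$ and ${\cal W}(\phi_2)=\mathop{\rm Vol}(\Phi_2)$, the volumes being taken with respect to $G$ from the induced metrics $\Phi_i^\ast G$. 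Second, a short computation in $\Lambda^2({\mathbb R}^{n+2})$ using $\langle N_{(\phi_1)_\theta},N_{(\phi_1)_\theta}\rangle_p=\epsilon$ and the defining identities for the $\epsilon$-rotation shows that the wedge $\phi\wedge N$ is unchanged under this rotation, i.e. $\Phi_1=\Phi_2$ not only as subsets of $L^{\pm}({\mathbb S}^{n+1}_{p,1})$ but as maps on $\Sigma$.

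Putting the pieces together: since $\Phi_1=\Phi_2$ pointwise on $\Sigma$, the induced metrics $\Phi_1^\ast G$ and $\Phi_2^\ast G$ are the same tensor, hence the two volume integrals coincide and ${\cal W}(\phi_1)=\mathop{\rm Vol}(\Phi_1)=\mathop{\rm Vol}(\Phi_2)={\cal W}(\phi_2)$. For $n=2$ one repeats the argument verbatim with $G'$, $\mathop{\rm Vol}'$ and (\ref{e:impeqt1}) in place of $G$, $\mathop{\rm Vol}$ and (\ref{e:impeqt}), obtaining ${\cal W}'(\phi_1)={\cal W}'(\phi_2)$. There is no genuine obstacle: the only points requiring a line of verification are that the unit normal of $(\phi_1)_\theta$ is the expected $\epsilon$-rotation of $N_1$ and that this preserves real diagonalizability, both of which are immediate --- and one could even sidestep the second by applying (\ref{e:impeqt}) to $\phi_1$ and $\phi_2$ separately, never tracking how the $k_i$ transform. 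Everything else is a direct consequence of the identity ``$\,{\cal W}=$ volume of the Gauss map$\,$'' already contained in (\ref{e:impeqt})--(\ref{e:impeqt1}).
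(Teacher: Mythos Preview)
Your argument is correct and matches the paper's own reasoning: the paper does not give a separate proof but simply points to relations (\ref{e:impeqt}) and (\ref{e:impeqt1}) together with the fact (from \cite{An1}) that parallel hypersurfaces share the same Gauss map, which is precisely the identity ``${\cal W}=$ volume of the Gauss map'' combined with $\Phi_1=\Phi_2$ that you spell out. The extra verifications you record (that $(\phi_1)_\theta$ remains real diagonalizable and that the Gauss maps agree as maps) are harmless elaborations of what the paper leaves implicit.
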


Using the Einstein (para-) K\"ahler structure $(L^{\pm}({\mathbb S}^3_{p,1}),G,J)$ we obtain the following Corollary:
\begin{corollary}\label{c:cor00}
Let $\phi:\Sigma^n\rightarrow {\mathbb S}^{n+1}_{p,1}$ be a real diagonalizable hypersurface and let $k_1,\ldots k_n$ be the principal curvatures. If $\Phi$ is the Gauss map of $\phi$, then the function $\sum_{i=1}^n\tan\epsilon^{-1}(k_i)$ is harmonic with respect to the induced metric $\Phi^{\ast}G$ if and only if $\phi$ is a critical point of the functional $\int_\Sigma\sqrt{\Pi_{i=1}^n|\epsilon+k_i^2|}$.
\end{corollary}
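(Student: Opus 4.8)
The plan is to deduce the statement from Theorem~\ref{c:cor1}$(i)$ together with the standard description of Hamiltonian minimality through the Lagrangian angle. Fix a point away from umbilics, where the principal frame of $\phi$ is smooth and, by hypothesis, $\Phi^{\ast}G$ is non-degenerate; write $\beta:=\sum_{i=1}^{n}\tan\epsilon^{-1}(k_i)$, where $\tan\epsilon^{-1}$ is $\arctan$ for $\epsilon=1$ and $\operatorname{artanh}$ (resp.\ $\operatorname{arcoth}$, according to $|k_i|<1$ or $|k_i|>1$; the excluded case $|k_i|=1$ is where $\Phi^{\ast}G$ degenerates) for $\epsilon=-1$. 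By Theorem~\ref{c:cor1}$(i)$, $\phi$ is a critical point of $\int_{\Sigma}\sqrt{\Pi_i|\epsilon+k_i^2|}\,dV$ exactly when its Gauss map $\Phi$ is Hamiltonian minimal in $(L^{\pm}({\mathbb S}^{n+1}_{p,1}),G,{\mathbb J},\Omega)$, i.e.\ $\mbox{div}\,{\mathbb J}H=0$ for the mean curvature vector $H$ of $\Phi$. So it suffices to prove that $\mbox{div}\,{\mathbb J}H=0$ is equivalent to the harmonicity of $\beta$ with respect to $\Phi^{\ast}G$.

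The first half of the argument is soft and uniform in $\epsilon$. Since $\Phi$ is Lagrangian, ${\mathbb J}H$ is tangent to $\Sigma$; recall (\cite{Oh1}, and \cite{AGR} for the para-K\"ahler case) that for a Lagrangian immersion into a (para-)K\"ahler--Einstein manifold the mean curvature $1$-form $\alpha_H:=\Omega(H,\cdot)$ is closed — here this uses that $G$ is Einstein with $S=2\epsilon n^2$ and that $\Phi^{\ast}\Omega=0$. Hence on a simply connected neighbourhood $\alpha_H=d\beta_0$ for a function $\beta_0$, the Lagrangian angle (unique up to an additive constant), and since $\Omega=\epsilon\,G({\mathbb J}\cdot,\cdot)$ this reads ${\mathbb J}H=\epsilon\,\nabla\beta_0$, so $\mbox{div}\,{\mathbb J}H=\epsilon\,\Delta_{\Phi^{\ast}G}\beta_0$. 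Therefore $\Phi$ is Hamiltonian minimal if and only if $\beta_0$ is harmonic, and the corollary reduces to the identification $\beta_0=\beta+\mathrm{const}$, i.e.\ to proving $\alpha_H=d\!\left(\sum_i\tan\epsilon^{-1}(k_i)\right)$.

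For that identification I would compute $\alpha_H$ explicitly in the principal frame $(e_1,\dots,e_n)$ of $\phi$, in which $\Phi^{\ast}G=\mbox{diag}(\epsilon_i(\epsilon+k_i^2))$ and, by \cite{An1}, $\bar e_i:=d\Phi(e_i)=e_i\wedge N+k_i\,e_i\wedge\phi$. Differentiating $\bar e_j$ with the flat connection of $\Lambda^2({\mathbb R}^{n+2})$, using the Gauss--Weingarten equations of the chain $\phi\subset{\mathbb S}^{n+1}_{p,1}\subset{\mathbb R}^{n+2}$ together with the Codazzi equations of the (diagonalizable) shape operator $A$ — the latter expressing the derivatives $e_i(k_j)$ through the connection coefficients of the frame — one extracts the second fundamental form $h$ of $\Phi$, its fully symmetric cubic form $C(\cdot,\cdot,\cdot)=\Omega(h(\cdot,\cdot),\cdot)$, and then
\[
\alpha_H(\bar e_j)=\sum_i\frac{C(\bar e_i,\bar e_i,\bar e_j)}{\epsilon_i(\epsilon+k_i^2)}=\sum_i\frac{\epsilon\,e_j(k_i)}{\epsilon+k_i^2}=e_j\!\left(\sum_i\tan\epsilon^{-1}(k_i)\right),
\]
the last equality using $(\tan\epsilon^{-1})'(t)=\epsilon/(\epsilon+t^2)$. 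This shows $\alpha_H=d\beta$ and completes the proof. A quick consistency check: replacing $\phi$ by a parallel hypersurface $\phi_\theta$ leaves $\Phi$, hence $\alpha_H$, unchanged while sending each $k_i$ to $\tan\epsilon(\tan\epsilon^{-1}(k_i)+\theta)$, so $\beta$ only changes by the additive constant $\pm n\theta$ — exactly as a Lagrangian angle should.

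The genuine obstacle is this last computation: one must differentiate $\bar e_j$, project onto the normal bundle of $\Phi$ inside $\Lambda^2({\mathbb R}^{n+2})$, and verify — this is where the Codazzi equations are essential — that every term built from covariant derivatives of the principal directions cancels, leaving only $\sum_i e_j(k_i)/(\epsilon+k_i^2)$. The delicate points are purely bookkeeping: the signs coming from $\epsilon$, from the $\epsilon_i=g(e_i,e_i)$, from the $\phi$-component produced by ${\mathbb S}^{n+1}_{p,1}\subset{\mathbb R}^{n+2}$, and from handling the K\"ahler ($\epsilon=1$) and para-K\"ahler ($\epsilon=-1$) cases simultaneously; the conceptual skeleton is soft.
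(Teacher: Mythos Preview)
Your proposal is correct and follows essentially the same route as the paper: reduce via Theorem~\ref{c:cor1}$(i)$ to the equivalence ``$\operatorname{div}{\mathbb J}H=0\Leftrightarrow\Delta_{\Phi^{\ast}G}\beta=0$'', and establish the latter from ${\mathbb J}H=\epsilon\,\nabla\beta$ (up to a constant factor). The only difference is that the paper quotes the formula $\vec{H}=\frac{\epsilon}{n}\,{\mathbb J}\nabla\big(\sum_i\tan\epsilon^{-1}(k_i)\big)$ directly from \cite{An1}, whereas you reconstruct it yourself via the closedness of $\alpha_H$ in the (para-)K\"ahler--Einstein setting together with an explicit principal-frame computation of $\alpha_H$; your version is more self-contained but otherwise the argument is the same.
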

\begin{proof}
Let $\Phi$ be the Gauss map of $\phi$ and consider the Einstein (para-) K\"ahler structure $(G,J)$. Then , from \cite{An1}, we know that the mean curvature $\vec{H}$ of $\Phi$ is given by 
\[
\vec{H}=\frac{\epsilon}{n}J\nabla\Big(\sum_{i=1}^n\tan\epsilon^{-1}(k_i)\Big),
\]
where $\nabla$ denotes the Levi-Civita connection of the induced metric $\Phi^{\ast}G$. Then, the Corollary follows by the following relation,
\[
\mbox{div}(nJ\vec{H})=\Delta\Big(\sum_{i=1}^n\tan\epsilon^{-1}(k_i)\Big),
\]
where div and $\Delta$ denote the divergence operator and the Laplacian of $\Phi^{\ast}G$.
\end{proof}

Using the Remark \ref{r:rem}, we obtain the following two Corollaries:

\begin{corollary}\label{c:cor11}
Let $\phi:\Sigma^n\rightarrow {\mathbb S}^{n+1}\; (\mbox{resp. in the hyperbolic space}\; {\mathbb H}^{n+1})$ be a hypersurface in the sphere ${\mathbb S}^{n+1}$. Then the Gauss map $\Phi$ is a Hamiltonian minimal submanifold in the (para-) K\"ahler Einstein structure $(G,{\mathbb J})$ if and only if the hypersurface $\phi$ is a critical point of the functional ${\cal W}(\phi)=\int_\Sigma\sqrt{\Pi_{i=1}^n(1+k_i^2)}\; (\mbox{resp.}\;  {\cal W}(\phi)=\int_\Sigma\sqrt{\Pi_{i=1}^n|1-k_i^2|})$, where $k_1,\ldots,k_n$ are the principal curvatures of $\phi$.
\end{corollary}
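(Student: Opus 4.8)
The plan is to obtain this as an immediate specialization of Theorem~\ref{c:cor1}(i), with Remark~\ref{r:rem} used to dispense with the diagonalizability hypothesis. First I would recall how the two Riemannian real space forms sit inside the family $\mathbb{S}^{n+1}_{p,1}$: the round sphere is $\mathbb{S}^{n+1}=\mathbb{S}^{n+1}_{0,1}$, for which the ambient product $\left<\cdot,\cdot\right>_{0}$ is positive definite, every oriented geodesic is spacelike, and the unit normal $N$ of any hypersurface has $\epsilon=\left<N,N\right>_{0}=1$; the hyperbolic space is $\mathbb{H}^{n+1}=\mathbb{S}^{n+1}_{n+1,1}$, whose induced metric is negative definite, so that every oriented geodesic is timelike and the unit normal has $\epsilon=-1$. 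Correspondingly, the ``(para-)K\"ahler Einstein structure $(G,\mathbb{J})$'' of the statement is the K\"ahler structure on $L^{+}(\mathbb{S}^{n+1}_{0,1})=\mathbb{L}(\mathbb{S}^{n+1})$ in the first case and the para-K\"ahler structure on $L^{-}(\mathbb{S}^{n+1}_{n+1,1})=\mathbb{L}(\mathbb{H}^{n+1})$ in the second.

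Next, since both $\mathbb{S}^{n+1}$ and $\mathbb{H}^{n+1}$ are Riemannian, Remark~\ref{r:rem} shows that, away from umbilic points, every hypersurface $\phi$ is a real diagonalizable immersion, so the hypothesis of Theorem~\ref{c:cor1}(i) is automatically satisfied. Applying Theorem~\ref{c:cor1}(i) then gives that $\Phi=\phi\wedge N$ is a Hamiltonian minimal submanifold of $(L^{\pm}(\mathbb{S}^{n+1}_{p,1}),G,\mathbb{J})$ if and only if $\phi$ is a critical point of $\mathcal{W}(\phi)=\int_{\Sigma}\sqrt{\Pi_{i=1}^{n}|\epsilon+k_{i}^{2}|}\,dV$. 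Substituting $\epsilon=1$ yields $\mathcal{W}(\phi)=\int_{\Sigma}\sqrt{\Pi_{i=1}^{n}(1+k_{i}^{2})}\,dV$, where the absolute values are superfluous since $1+k_{i}^{2}>0$; substituting $\epsilon=-1$ yields $\mathcal{W}(\phi)=\int_{\Sigma}\sqrt{\Pi_{i=1}^{n}|k_{i}^{2}-1|}\,dV=\int_{\Sigma}\sqrt{\Pi_{i=1}^{n}|1-k_{i}^{2}|}\,dV$. These are precisely the two functionals in the statement, which completes the argument.

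There is no substantial obstacle here, as the corollary is bookkeeping on top of Theorem~\ref{c:cor1}; the only points deserving care are the conventions. One must verify that the orientation and causal type are arranged so that the normal congruence of a hypersurface in $\mathbb{H}^{n+1}$ genuinely lands in the timelike component $L^{-}$, so that $\epsilon=-1$ is the correct value to feed into Theorem~\ref{c:cor1}; and one must note that in the hyperbolic case the integrand $\sqrt{\Pi_{i=1}^{n}|1-k_{i}^{2}|}$ is only smooth off the locus where some $k_{i}=\pm1$, so the criticality statement---exactly as in Theorem~\ref{c:cor1}---is to be read on the open set where $\Phi^{*}G$ is non-degenerate and $\phi$ has no umbilic points. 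In the spherical case no restriction beyond avoiding umbilics is needed, since $\Phi^{*}G=\mathrm{diag}(1+k_{1}^{2},\dots,1+k_{n}^{2})$ is automatically non-degenerate.
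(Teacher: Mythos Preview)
Your proposal is correct and follows exactly the paper's approach: the paper states only that the corollary is obtained from Theorem~\ref{c:cor1}(i) via Remark~\ref{r:rem}, and this is precisely what you do, with the additional (and welcome) bookkeeping of identifying which value of $\epsilon$ corresponds to each space form and noting the degeneracy loci. There is nothing to add.
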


\begin{corollary}\label{c:cor11}
Let $\phi:\Sigma\rightarrow {\mathbb S}^3\; (\mbox{resp. in the hyperbolic space}\; {\mathbb H}^3)$ be a surface in the sphere ${\mathbb S}^3$. Then the Gauss map $\Phi$ is a Hamiltonian minimal submanifold with respect to the K\"ahler, conformally flat structure $(G',{\mathbb J}')$ if and only if the surface $\phi$ is a critical point of the functional ${\cal W}'(\phi)=\int_\Sigma\sqrt{H^2-K+1}$ $(\mbox{resp.}\;  {\cal W}'(\phi)=\int_\Sigma\sqrt{H^2-K-1}\; )$, where $H,K$ denote the mean and the Gauss curvature of $\phi$.
\end{corollary}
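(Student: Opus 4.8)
The plan is to reduce the claim to Theorem~\ref{c:cor1}(ii) by means of an elementary pointwise identity coming from the Gauss equation. By Remark~\ref{r:rem}, away from umbilic points a surface $\phi$ in $\mathbb{S}^3$ (resp. in $\mathbb{H}^3$) is a real diagonalizable immersion, so Theorem~\ref{c:cor1}(ii) applies as it stands: the Gauss map $\Phi$ is a Hamiltonian minimal surface with respect to the conformally flat (para-)K\"ahler structure $(G',\mathbb{J}')$ if and only if $\phi$ is a critical point of ${\cal W}'(\phi)=\int_\Sigma|k_1-k_2|\, dA$, where $k_1,k_2$ are the principal curvatures. It therefore remains only to rewrite the integrand $|k_1-k_2|$ in terms of the intrinsic quantities $H$ and $K$.

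For this I would use that a surface $\Sigma^2$ in a $3$-dimensional space form of constant sectional curvature $c$ satisfies, by the Gauss equation, $K=c+k_1k_2$, while $2H=k_1+k_2$ by definition. Combining the two,
\[
(k_1-k_2)^2=(k_1+k_2)^2-4k_1k_2=4H^2-4(K-c)=4\big(H^2-K+c\big),
\]
so that $H^2-K+c=\tfrac14(k_1-k_2)^2\ge 0$, with strict inequality exactly away from umbilic points, and hence $|k_1-k_2|=2\sqrt{H^2-K+c}$. Specializing $c=1$ for $\mathbb{S}^3$ and $c=-1$ for $\mathbb{H}^3$ yields ${\cal W}'(\phi)=2\int_\Sigma\sqrt{H^2-K+1}\, dA$ and, respectively, ${\cal W}'(\phi)=2\int_\Sigma\sqrt{H^2-K-1}\, dA$. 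Since each of these is a positive constant multiple of the functional in the statement, the two functionals have exactly the same critical points, and combining this with the first paragraph gives the asserted equivalence.

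The only delicate point is the hyperbolic case, since the ambient spaces in the body of the paper are the curvature $+1$ models $\mathbb{S}^{n+1}_{p,1}$. I would handle $\mathbb{H}^3$ by realizing it as the hyperboloid $\mathbb{S}^3_{3,1}$, whose induced metric is definite (hence still covered by Remark~\ref{r:rem}) and agrees with the genuine hyperbolic metric up to overall sign; since the Levi-Civita connection and hence the geodesics, the space $L^{\pm}$, the Gauss map, and the property of being a critical point of a volume functional are all unchanged by an overall sign reversal of the metric, Theorem~\ref{c:cor1}(ii) transfers verbatim, the relevant structure on $L(\mathbb{H}^3)$ now being the para-K\"ahler conformally flat one. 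The only arithmetic effect of this reinterpretation is that the ambient curvature entering the Gauss equation, normalized to $+1$ in the paper, must be read as $c=-1$ when the identity above is expressed through the intrinsic $H$ and $K$ of $\mathbb{H}^3$. I expect no obstacle beyond keeping the signs of $c$ and of $\epsilon=\langle N,N\rangle$ consistent throughout; the content of the corollary is essentially the single identity $(k_1-k_2)^2=4(H^2-K+c)$.
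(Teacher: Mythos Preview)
Your proposal is correct and follows exactly the route the paper intends: the paper's entire proof is the one-line ``Using Remark~\ref{r:rem}, we obtain the following two Corollaries,'' i.e.\ reduce to Theorem~\ref{c:cor1}(ii) via real diagonalizability and then read off the result. You have simply made explicit the only missing step, namely the Gauss-equation identity $(k_1-k_2)^2=4(H^2-K+c)$, and your handling of the $\mathbb{H}^3$ case (via the anti-isometric model $\mathbb{S}^3_{3,1}$, which explains why $\epsilon=-1$ and hence why the sign in the functional flips) is more careful than anything the paper spells out.
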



\noindent Nikos Georgiou, UFAM, Manaus, AM, Brazil.

\noindent nikos@ime.usp.br 

\vspace{0.1in}

\noindent G. A. Lobos, UFSCar, S\~ao Carlos, SP, Brazil.

\noindent lobos@dm.ufscar.br

\end{document}